\def\A{\mathbb{A}}                          \def\F{\mathbb{F}}                                        \def\N{\mathbb{N}}       \def\PP{\mathbb{P}}     \def\Q{\mathbb{Q}}                
\def\Z{\mathbb{Z}}
        \def\cC{\mathcal{C}}
\def\GK{K_0(\Var_k)}
\DeclareMathOperator{\Spec}{Spec}
\DeclareMathOperator{\Var}{Var}
\DeclareMathOperator{\Sym}{Sym}
\DeclareMathOperator{\Fr}{Fr}
\DeclareMathOperator{\gh}{gh}
\newtheoremstyle{thm}{9pt}{9pt}{\itshape}{}{\itshape \bfseries}{.\,---}{.5em}{}
\theoremstyle{thm}
\theoremstyle{plain}
\newtheorem{theorem}{Theorem}
\newtheorem{lemma}[theorem]{Lemma}
\newtheorem{corollary}[theorem]{Corollary}
\theoremstyle{definition}
\newtheorem{definition}[theorem]{Definition}
\newtheorem{proposition}[theorem]{Proposition}
\newtheorem{example}[theorem]{Example}
\newtheorem*{Claim*}{Claim}
\newtheorem*{remark}{Remark}
\def\skipline{
\vspace{4mm}%
}
\numberwithin{theorem}{section}
\begin{document}
\title[A MacDonald Formula for Varieties Over Finite Fields]{A MacDonald Formula for Zeta Functions of Varieties Over Finite Fields}
\author{Jonathan A. Huang}
\address{Department of Mathematics and Computer  Science, Wesleyan University, Middletown, CT 06459-0128}
\email{jhuang03@wesleyan.edu}
\date{January 11, 2018}
\keywords{Grothendieck ring of varieties, motivic measures, Kapranov motivic zeta function, Witt vectors, $\lambda$-ring.}
\subjclass[2010]{11S40,13F35,14C15}

\begin{abstract}
We provide a formula for the generating series of the zeta function $Z(X,t)$ of symmetric powers $\Sym^n X$ of varieties over finite fields.  This realizes $Z(X,t)$ as an exponentiable motivic measure whose associated Kapranov motivic zeta function takes values in $W(R)$ the big Witt ring of $R=W(\Z)$.   We apply our formula to compute $Z(\Sym^nX,t)$ in a number of explicit cases.  Moreover, we show that all $\lambda$-ring motivic measures have zeta functions which are exponentiable.  In this setting, the formula for $Z(X,t)$ takes the form of a MacDonald formula for the zeta function.
\end{abstract}

\maketitle

%

\section{Introduction}
A remarkable formula of MacDonald \cite{MacDonald} provides a closed expression for the generating series of the Poincar\'{e} polynomial of the symmetric powers $\Sym^n X$ of a space $X$. Let $X$ be a compact complex manifold of dimension $m$, and recall the Poincar\'{e} polynomial is defined as $P(X,z) = \sum_i (-1)^i b_i(X) z^i$, where $b_i=b_i(X) = \dim_\Q H^i(X,\Q)$ are the Betti numbers. The MacDonald formula for $P(X,z)$ is
\begin{equation} \label{macd}
\sum_{n \geqslant 0} P(\Sym^n X, z) t^n =
\frac{(1-z^1t)^{b_1}(1-z^3t)^{b_3}\cdots (1-z^{2m-1}t)^{b_{2m-1}}}
{(1-t)^{b_0}(1-z^2t)^{b_2}\cdots(1-z^{2m}t)^{b_{2m}}}.
\end{equation}
Thus the Poincar\'{e} polynomial $P(\Sym^n X,z)$ may be expressed directly in terms of invariants associated to $X$.  A similar formula for the Euler characteristic is recovered by setting $z=1$ as  $P(X,1) = \chi(X)$.

In this short note we prove an analogous formula for the Hasse-Weil zeta function $Z(X,t)$ of varieties over finite fields.  To provide such a formula, we consider the zeta function $Z(X,t)$ as an element in the Witt ring $W(\Z)$.  As noted by Ramachandran \cite{Ramachandran15}, $Z(X,t)$ takes form of an Euler-Poincar\'{e} characteristic as an alternating sum of \emph{Teichm\"{u}ller elements} $[\alpha]$:
$$
Z(X,t) = \sum_{i,j} (-1)^{i+1} [\alpha_{ij}] \in W(\overline{\Q_\ell})
$$
where $\alpha_{ij}$ is the $j$th (inverse) eigenvalue of the Frobenius operator acting on  the $i$th \'{e}tale cohomology.  By definition, the zeta function $Z(X,t)$ takes values in the ring $W(\Z) \subset W(\overline{\Q_\ell})$.

\skipline

Our main result is a MacDonald formula for $Z(X,t)$:
\begin{theorem}\label{mainthm}
Given a quasi-projective variety $X \in \Var_{\F_q}$, we have
$$
\sum_{n \geqslant 0} Z(\Sym^n X, t) u^n =  \sum_{i,j} (-1)^{i+1}[[\; [\alpha_{ij}] \;]] \in W(W(\overline{\Q_\ell}))
$$
where the $[[\; [\alpha_{ij}] \;]]$ are double Teichm\"{u}ller elements.
\end{theorem}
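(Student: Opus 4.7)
The plan is to realize $\sum_{n \geqslant 0} Z(\Sym^n X, t)\,u^n$ as the image of the Kapranov motivic zeta function $\sum_n [\Sym^n X]\,u^n \in W(K_0(\Var_{\F_q}))$ under $Z(-,t)$, and then to evaluate it using the $\lambda$-ring formalism on $W(R)$. Recall that $W(R)$ carries a natural (Cartier) $\lambda$-ring structure in which each Teichm\"uller element $[\alpha]$ is a line element, so that $\sigma^n([\alpha]) = [\alpha^n]$ and
\[
\sigma^u([\alpha]) = \sum_{n \geqslant 0} [\alpha^n]\,u^n = (1 - [\alpha]\,u)^{-1},
\]
which under the identification $W(W(R)) \cong 1 + u\,W(R)[[u]]$ is precisely the double Teichm\"uller $[[\;[\alpha]\;]] \in W(W(R))$.

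The technical heart of the proof is to establish the $\lambda$-ring compatibility
\[
Z(\Sym^n X, t) = \sigma^n\bigl(Z(X,t)\bigr) \quad \text{in } W(\overline{\Q_\ell}),
\]
i.e.\ that $Z(-,t)\colon K_0(\Var_{\F_q}) \to W(\overline{\Q_\ell})$ is a $\lambda$-ring homomorphism when $K_0(\Var_{\F_q})$ is equipped with the Kapranov $\lambda$-structure $\sigma^n([X]) = [\Sym^n X]$. I would deduce this from the MacDonald-type decomposition in $\ell$-adic cohomology: $H^*_c(\Sym^n X_{\bar{\F}_q}, \Q_\ell)$ is canonically the $n$-th symmetric power of $H^*_c(X_{\bar{\F}_q}, \Q_\ell)$ in the category of $\Z/2$-graded Frobenius modules (super-vector spaces with Frobenius action). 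Passing to Frobenius traces and hence to Teichm\"uller expansions produces exactly $\sigma^n(Z(X,t))$, with the super-signs accounting for the parity conventions on both sides.

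Granted this compatibility, the proof is formal. Since $\sigma^u\colon R \to W(R)$ is additive with respect to Witt addition --- $\sigma^u(a + b) = \sigma^u(a) \boxplus \sigma^u(b)$ in $W(W(R))$ --- applying it to the Teichm\"uller expansion of $Z(X,t)$ yields
\[
\sum_{n \geqslant 0} Z(\Sym^n X, t)\,u^n = \sigma^u\bigl(Z(X,t)\bigr) = \sigma^u\Bigl(\sum_{i,j}(-1)^{i+1}[\alpha_{ij}]\Bigr) = \sum_{i,j}(-1)^{i+1}\,[[\;[\alpha_{ij}]\;]]
\]
in $W(W(\overline{\Q_\ell}))$. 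The constant term $Z(\Spec \F_q, t) = (1-t)^{-1}$ is the multiplicative identity of $W(\overline{\Q_\ell})$, so the generating series legitimately lives in $W(W(\overline{\Q_\ell})) \cong 1 + u\,W(\overline{\Q_\ell})[[u]]$.

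The principal obstacle is the $\lambda$-compatibility; once in hand, the rest reduces to the formal manipulations above. The cohomological route rests on the classical decomposition of $H^*_c(\Sym^n X)$ as a super-symmetric power of $H^*_c(X)$, with careful bookkeeping of signs in odd degrees. A more arithmetic alternative is an Euler-product argument: identify closed points of $\Sym^n X$ with effective $0$-cycles of degree $n$ on $X$, and compare Euler products on both sides in parallel with the classical factorization $Z(X,t) = \prod_{x \in |X|}(1 - t^{\deg x})^{-1}$. Either approach must correctly match the parity signs arising from the $\Z/2$-grading, but once that matching is done the double Teichm\"uller formula drops out without further effort.
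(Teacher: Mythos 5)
Your strategy is sound and is genuinely different from the paper's. You reduce the theorem to the statement that the zeta-function measure is a $\lambda$-measure, i.e.\ $Z(\Sym^n X,t)=\sigma^n\bigl(Z(X,t)\bigr)$ in $W(\overline{\Q_\ell})$, and then conclude formally from the additivity of $\bm{\sigma_u}$ and the fact that Teichm\"uller elements are line elements, so $\bm{\sigma_u}([\alpha])=[[\,[\alpha]\,]]$; the paper instead proves this $\lambda$-compatibility only as a \emph{corollary} of the theorem, and proves the theorem itself by an elementary ghost-coordinate argument: the zero-cycle description of $\Sym^n X$ gives $Z(X,t)=\sum_n N_1(\Sym^n X)t^n$, Newton's identities (the relation $\phi$ of Lemma \ref{newton}) together with the Frobenius operators $\Fr_r$ give $N_r(\Sym^n X)=\phi(N_r(X),\dots,N_{nr}(X))$, and the identity in $W(W(\Z))$ is then checked on ghost coordinates, using $\gh_r^t\bigl(\sum_{i,j}(-1)^{i+1}[\alpha_{ij}]^n\bigr)=N_{nr}(X)$ and injectivity of the ghost map over torsion-free rings. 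Your cohomological route buys the conceptual statement directly and generalizes to other $\lambda$-ring-valued measures, but note that it leans on two inputs you assert rather than verify: (i) the decomposition $H^*_c(\Sym^n X_{\overline{\F}_q},\Q_\ell)\cong\bigl(H^*_c(X_{\overline{\F}_q},\Q_\ell)^{\otimes n}\bigr)^{S_n}$ with Koszul signs (true for quasi-projective $X$ since $\Sym^n X=X^n/S_n$ and $|S_n|$ is invertible in $\Q_\ell$, but it is the heavy ingredient), and (ii) the bookkeeping identifying super-symmetric powers of graded Frobenius modules with $\sigma^n$ on alternating sums of Teichm\"uller elements, which requires $\Sym^n(V_0\oplus V_1)=\bigoplus_{i+j=n}\Sym^i V_0\otimes\Lambda^j V_1$ matched against $\sigma^n(x-y)=\sum_{i+j=n}(-1)^j\sigma^i(x)\lambda^j(y)$; without spelling out (ii) the ``signs drop out'' claim is exactly where the content sits. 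Also, the Kapranov structure on $K_0(\Var_{\F_q})$ is only a pre-$\lambda$-structure (Larsen--Lunts), so you should say pre-$\lambda$-ring map ($\lambda$-measure) rather than $\lambda$-ring homomorphism, though this does not affect the argument. Your closing remark about an Euler-product/zero-cycle alternative is in fact essentially the paper's proof.
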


\skipline

We interpret this result in the setting of motivic zeta functions, specifically in the context of $\lambda$-ring-valued motivic measures.  Let $\GK$ be the Grothendieck ring of varieties and consider an $A$-valued motivic measure $\mu:\GK \rightarrow A$.  In \cite{Kapranov}, Kapranov associates to $\mu$ a motivic zeta function
$$\zeta_\mu(X,t)  = \sum_{n\geqslant 0} \mu(\Sym^n X) t^n,$$
an invertible power series with coefficients in $A$.  Following Ramachandran \cite{Ramachandran15}, we say that $\mu$ \emph{exponentiates} or \emph{can be exponentiated} if $\zeta_\mu$ takes values in the big Witt ring $W(A)$.  That is, if the zeta function $\zeta_\mu$ reflects the product structure in $\GK$ via the Witt product in $W(A)$. A particularly interesting case is that of $A$-valued motivic measures where $A$ is a $\lambda$-ring.  It is shown by Ramachandran and Tabuada \cite{RamaTabuada} that if such a measure is a pre-$\lambda$-ring map on $\GK$, then it exponentiates (see Proposition \ref{NRTaba}).  We refer to such measures $\mu$ as $\lambda$-measures.

The zeta function of such a measure is a ring homomorphism into $W(A)$ and thus induces a zeta function measure $\mu_Z([X]) = \zeta_\mu(X,t)$.  We prove that if $\mu$ is a $\lambda$-measure, then $\mu_Z$ is also a $\lambda$-measure (Theorem \ref{zetaofzeta}); this involves a study of the $\lambda$-ring structure of the big Witt ring $W(A)$.  The classical MacDonald formulae show that the Euler characteristic and Poincar\'{e} polynomial define $\lambda$-measures.  The formula in  Theorem \ref{mainthm} shows that the zeta function $Z(X,t)$ for varieties over finite fields defines a $\lambda$-measure.

Analogous MacDonald formulae can be proved in other contexts.  Let $\cC$ be a $k$-linear tensor category, in the sense of a $k$-linear additive pseudo-abelian symmetric monoidal category where $\otimes$ is $k$-linear.  As shown by Heinloth \cite{Heinloth07}, the Grothendieck ring $K_0(\cC)$ is a $\lambda$-ring.  Del Ba\~{n}o Rollin \cite{Rollin01} and Maxim-Sch\"{u}rmann \cite{MaximSchurmann} prove examples of closed formulae for various generating series of measures taking values in various $K(\cC)$; these follow directly from the opposite $\lambda$-ring structure map $\sigma_t$.  The motivic measures described in \cite{Rollin01} and \cite{MaximSchurmann} are also $\lambda$-measures.

After reviewing some preliminaries on $\lambda$-rings and Witt rings in Section \ref{lambdarings}, we prove the formula for zeta functions in Section \ref{FiniteFieldCase} and provide some explicit computations of zeta functions of various $\Sym^n X$ in Section \ref{examples}.  Finally in Section \ref{lambdaringmeasures}, we provide some context for these results in the theory of $\lambda$-ring-valued motivic measures and explore related results for motivic zeta function measures. 

\subsection*{Acknowledgements}
The author would like to thank Niranjan Ramachandran for suggesting this project and for eagerly and openly sharing his ideas and insights. The author is also indebted to the Department of Mathematics at the University of Maryland, College Park---in particular to helpful discussions with and encouragement from Larry Washington and Jonathan Rosenberg.

\section{Lambda rings and Witt rings}\label{lambdarings}
We begin by describing the $\lambda$-ring structure on the big ring of Witt vectors. Our presentation here of the big Witt ring follows Bloch \cite{Bloch} and Ramachandran \cite{Ramachandran15}.  For a more on $\lambda$-rings, the reader may consult Knutson \cite{Knutson}, Yao \cite{Yau}, and Hazewinkel \cite{Hazewinkel}.

Let $A$ be a commutative ring with identity.  Denote by $\Lambda(A)$ the group of invertible power series $(1+tA[[t]],\times)$ under usual multiplication of power series.  For a ring homomorphism $f:A \rightarrow A'$, let $\Lambda_f:\Lambda(A) \rightarrow \Lambda(A')$ be the obvious induced map.  Recall the ghost map $\gh:\Lambda(A) \rightarrow A^{\N}$ is defined as, for $P \in \Lambda(A)$,
$$
\gh(P) = (b_1,b_2,\ldots) \quad \textrm{where} \quad \frac{t}{P}\frac{dP}{dt} = \sum_{n=1}^\infty b_n t^n.
$$
This represents the power series $P(t)$ by a series of coordinates $(b_1,b_2,\ldots)$ called ghost coordinates $\gh_n(P) = b_n$.  The ghost map is a functorial group homomorphism where $ A^{\N}$ has a pointwise addition.  Moreover, the relation between the power series coefficients and ghost coordinates can be made explicit:
\begin{lemma}\label{newton}
For $P(t) = \sum_n a_n t^n \in \Lambda(A)$ and $b_n = \gh_n(P(t))$, we have
$$
na_n = b_n + a_1b_{n-1} + \cdots a_{n-1}b_{1}.
$$
This relation uniquely determines $P(t)$ in terms of its ghost coordinates $b_n$ in the case where $A$ has no $\Z$-torsion.
\end{lemma}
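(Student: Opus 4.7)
The plan is to derive the relation directly from the definition of the ghost map and then extract uniqueness as a simple consequence of the recursive shape of that relation.

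First I would rewrite the defining identity $\sum_{n \geqslant 1} b_n t^n = \frac{t}{P}\frac{dP}{dt}$ in the cleared form
$$t P'(t) \;=\; P(t)\cdot \sum_{n\geqslant 1} b_n t^n.$$
The point of this reformulation is to avoid dividing by $P(t)$ and to make both sides into genuine power series in $A[[t]]$, so that coefficient comparison is unproblematic.

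Next, expand each side. The left-hand side is $\sum_{n\geqslant 1} n a_n t^n$, using $P(t)=\sum_{k\geqslant 0} a_k t^k$ with $a_0=1$ (recall $P\in \Lambda(A)$). The right-hand side, by the Cauchy product, is
$$\sum_{n\geqslant 1}\Bigl(\sum_{k=0}^{n-1} a_k b_{n-k}\Bigr)t^n \;=\; \sum_{n\geqslant 1}\bigl(b_n + a_1 b_{n-1} + \cdots + a_{n-1}b_1\bigr)t^n,$$
since $a_0=1$. Equating coefficients of $t^n$ yields the stated formula $na_n = b_n + a_1 b_{n-1} + \cdots + a_{n-1} b_1$. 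This is the main calculation, and it is essentially mechanical once the identity is cleared of denominators.

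Finally, for the uniqueness statement: the formula expresses $n a_n$ as a polynomial in $b_1,\ldots,b_n$ and $a_1,\ldots,a_{n-1}$, so by induction on $n$ the quantity $n a_n \in A$ is determined by the ghost coordinates $(b_j)$. If $A$ is $\Z$-torsion free, then multiplication by $n$ is injective on $A$, so $a_n$ itself is uniquely determined from $n a_n$, which completes the induction step. No step here presents a real obstacle; the only subtlety worth flagging is that the $\Z$-torsion-free hypothesis gives uniqueness but not existence of a $P(t)$ with prescribed ghost coordinates, which would require $n$ to be invertible in $A$, not merely a non-zero-divisor.
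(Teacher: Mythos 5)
Your proof is correct and follows essentially the same route as the paper: clear the denominator to get $tP'(t) = P(t)\sum b_n t^n$, expand both sides, and compare coefficients. The only difference is that you spell out the uniqueness induction (using that multiplication by $n$ is injective when $A$ has no $\Z$-torsion), which the paper leaves implicit; your remark distinguishing uniqueness from existence is a reasonable extra observation.
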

\begin{proof}
By definition, $\dfrac{t}{P} \dfrac{dP}{dt} = \sum_{n=1}^\infty b_nt^n$ so that
\begin{eqnarray*}
P(t)\left( \sum_{n=1}^\infty b_nt^n \right) &=& t \frac{dP}{dt} \\
(1+ a_1t + a_2t^2 + \cdots )(b_1t + b_2t^2 + \cdots) &=& t(a_1 + 2a_2t + 3a_3t^2 \cdots) \\
&=& a_1t + 2a_2t^2 + 3a_3t^3 + \cdots
\end{eqnarray*}
and we simply identify coefficients.
\end{proof}

The big Witt ring $W(A)$ (also called the universal ring of Witt vectors) is the ring whose underlying additive group is $\Lambda(A)$ and whose multiplication $*_W$ is such that
$$
(1-at)^{-1} *_W (1-bt)^{-1} = (1-abt)^{-1} \quad \textrm{for } a,b \in A
$$
and the association $A \mapsto W(A)$ is functorial.  This suffices to define a commutative ring structure on $W(A)$.  The elements $[a] = (1-at)^{-1}$ above are called Teichm\"{u}ller elements.

The ghost map $\gh:W(A) \rightarrow A^{\N}$ is a ring homomorphism where $A^{\N}$ has a pointwise product.  That is, for $P,Q \in W(Z)$, $gh_n(P*_W Q) = \gh_n(P)\gh_n(Q)$ and thus in the case where $A$ has no $\Z$-torsion, multiplication is determined by pointwise product of ghost coordinates.   Note that for Teichm\"{u}ller elements, $\gh_n([a]) = a^n$.

For each positive integer $n$, the Frobenius map $\Fr_n:W(A) \rightarrow W(A)$  is the ring endomorphism defined as
$$
\Fr_n(P(t)) = \prod_{\zeta^n = 1} P(\zeta t^{1/n}).
$$
On Teichm\"{u}ller elements $\Fr_n([a]) = [a^n]$ and on ghost coordinates, $\gh_m (Fr_n(P))= \gh_{mn}( P)$.  Clearly, we have $\Fr_n \circ \Fr_m = \Fr_{nm}$.

Recall that a pre-$\lambda$-ring $(A,\lambda_t)$ is a commutative ring with identity $A$ equipped with a group homomorphism  called the structure map $\lambda_t:A \rightarrow \Lambda(A)$, where the coefficients are denoted as $\lambda_t(a) = \sum_{n\geqslant 0} \lambda^n(a) t^n$.  Note in particular that $\lambda^0(a) = 1$ and $\lambda^n(a+b) = \sum_{i+j=n} \lambda^i(a)\lambda^j(b)$.  Here, $\Lambda(A)$ is a pre-$\lambda$-ring with multiplication and a canonical structure map defined by certain universal polynomials (see Knutson \cite[p. 13]{Knutson}).  A map of pre-$\lambda$-rings $f:A \rightarrow A'$ is a ring homomorphism respecting the $\lambda$-ring structure maps.  If $\lambda_t:A \rightarrow \Lambda(A)$ is  a ring homomorphism, then $A$ is called a $\lambda$-ring; a $\lambda$-ring map is a map of pre-$\lambda$-rings.  $\Lambda(A)$ is a $\lambda$-ring.

The Witt ring $W(A)$ also has a canonical $\lambda$-ring structure\footnote{Some authors refer to a $\lambda$-ring as a special $\lambda$-ring and a pre-$\lambda$-ring as a $\lambda$-ring.} denoted $\bm{\lambda_u}$ (see Knutson \cite[p. 18]{Knutson}).  On Teichm\"{u}ller elements, it is given by
$$
\bm{\lambda_u}([a]) = [1] + [a] u \in \Lambda(W(A))
$$
where $\Lambda(W(A)) = 1 + u W(A)[[u]]$.  This $\lambda$-ring structure behaves well  in the case where $A$ itself is a $\lambda$-ring.  In particular, we have
\begin{proposition}\label{sigma}
If $(A, \lambda_t)$ is a $\lambda$-ring, then the opposite $\lambda$-ring map $\sigma_t = \lambda_{-t}^{-1}$, with $\sigma_t(a) := \sum_{n\geqslant 0} \sigma^n(a) t^n$  is a $\lambda$-ring homomorphism $\sigma_t:A \rightarrow W(A)$.  In particular,
$$
\Lambda_{\sigma_t}(\sigma_t(a)) = \bm{\sigma_u}(\sigma_t(a))
$$
\end{proposition}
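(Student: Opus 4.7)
The goal is to establish that $\sigma_t:A\to W(A)$ is a ring homomorphism and then to derive the displayed identity, which together amount to the statement that $\sigma_t$ is a $\lambda$-ring homomorphism into $(W(A),\bm{\lambda_u})$. I would organize the argument in three parts: additivity, Witt-multiplicativity, and compatibility with the $\lambda$-structures.

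Additivity is immediate. Since $\lambda_t:A\to \Lambda(A)$ is a group homomorphism, $\lambda_{-t}(a+b) = \lambda_{-t}(a)\lambda_{-t}(b)$, and inverting this identity in $\Lambda(A)$ gives $\sigma_t(a+b) = \sigma_t(a)\sigma_t(b)$; the product on the right is, by definition, the sum $\sigma_t(a)+_W\sigma_t(b)$ in $W(A)$.

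For multiplicativity and for $\lambda$-compatibility I would invoke the splitting principle for $\lambda$-rings: both desired identities are encoded by universal polynomial identities in the $\lambda^n(a)$'s, so it suffices to verify them when the inputs are line elements, i.e.\ elements $l$ with $\lambda_t(l) = 1+lt$, equivalently $\sigma_t(l) = (1-lt)^{-1} = [l]\in W(A)$, the Teichm\"{u}ller element. For two line elements $l,m$, the defining rule of Witt multiplication yields
$$
\sigma_t(l)*_W \sigma_t(m) = [l]*_W[m] = [lm] = (1-lmt)^{-1} = \sigma_t(lm),
$$
establishing multiplicativity universally. For the displayed identity I would observe that $[l]$ is itself a line element in $W(A)$ via $\bm{\lambda_u}([l]) = [1]+[l]u$, so $\bm{\sigma_u}([l]) = (1-[l]u)^{-1} = \sum_{n\geqslant 0}[l]^{*n}u^n = \sum_{n\geqslant 0}[l^n]u^n$ by iterated Witt multiplication. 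On the other hand, $\sigma^n(l) = l^n$ for a line element, so $\Lambda_{\sigma_t}(\sigma_t(l)) = \sum_{n\geqslant 0}\sigma_t(l^n)u^n = \sum_{n\geqslant 0}[l^n]u^n$. The two expressions agree on line elements, and the splitting principle promotes the identity to arbitrary $a\in A$.

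The delicate point, as I see it, is the careful invocation of the splitting principle: one must verify that each side of the desired identities is a universal polynomial expression in the coefficients of $\lambda_t(a)$ (and $\lambda_t(b)$), so that reduction to line elements is legitimate. If $A$ is $\Z$-torsion free this can be bypassed entirely by passing to ghost components via Lemma \ref{newton}: there $\gh_n(\sigma_t(a))$ is the $n$th Newton power sum in the $\lambda^i(a)$, the $\lambda$-structure on $W(A)$ admits an explicit ghost-level description, and both identities collapse to elementary pointwise calculations in $A^{\N}$.
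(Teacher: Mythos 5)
Your proposal is correct in substance, but it takes a genuinely different route from the paper. The paper's proof is a two-line factorization: since $A$ is a $\lambda$-ring, $\lambda_t:A\to\Lambda(A)$ is a $\lambda$-ring map; the Artin--Hasse exponential $\iota:\Lambda(A)\to W(A)$, $P(t)\mapsto P(-t)^{-1}$, is a $\lambda$-ring map; and $\sigma_t=\iota\circ\lambda_t$, so the composite is a $\lambda$-ring homomorphism. You instead verify the identities directly, reducing via the splitting/verification principle to line elements, where $\sigma_t(l)=[l]$ is a Teichm\"uller element, and computing $[l]*_W[m]=[lm]$ and $\bm{\sigma_u}([l])=\sum_{n}[l^n]u^n=\Lambda_{\sigma_t}(\sigma_t(l))$. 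This is essentially an unfolding of what the paper cites from Knutson: the facts that $\Lambda(A)$ is a $\lambda$-ring, that $\lambda_t$ is a $\lambda$-map precisely when $A$ is special, and that $\iota$ is a $\lambda$-isomorphism are themselves proved by this reduction to line elements. The paper's route buys brevity and outsources the universality bookkeeping; yours buys self-containedness and makes explicit the Teichm\"uller-level formulas ($[l]*_W[m]=[lm]$, $\bm{\sigma_u}([a])=[[\,[a]\,]]$) that the rest of the paper actually uses in Section 3 and the examples. Two points to tighten: the verification principle reduces universal identities to \emph{finite sums} of line elements, so after your single-line-element check you need the routine biadditivity step (both sides of the multiplicativity identity turn Witt sums in either argument into Witt sums, and both sides of the displayed identity turn Witt sums into products in $\Lambda(W(A))$, using the additivity you proved first); and you should note that $lm$ and $l^n$ are again line elements, which is exactly where the special-$\lambda$-ring axiom enters. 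Your ghost-coordinate remark is also sound, and in fact covers the general case once one reduces to the universal $\lambda$-ring, which is a ring of symmetric functions over $\Z$ and hence $\Z$-torsion free.
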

\begin{proof}
Since $A$ is a $\lambda$-ring, $\lambda_t:A \rightarrow \Lambda(A)$ is a $\lambda$-ring map.  The Artin-Hasse exponential $\iota:\Lambda(A) \rightarrow W(A), P(t)\mapsto P(-t)^{-1}$ is a $\lambda$-ring map, and $\iota \circ \lambda_t = \sigma_t$.
\end{proof}

The following examples will be useful:
\begin{itemize}
\item The ring $A = \Z$ is a $\lambda$-ring with $\lambda_t(a) = (1+t)^a$.  Here the map $\sigma_t$ is given by
$$
\sigma_t(a) = (1-t)^{-a}, \quad \quad \sigma^n(a) = \left(\begin{array}{c} n+a+1 \\ a \end{array}  \right).
$$
\item For a $\lambda$-ring $A$, the polynomial ring $A[z]$ can be given a natural $\lambda$-ring structure by setting $\lambda_t(z) = (1+zt)$.  Here the map $\sigma_t$ is given by
     $$
     \sigma_t(z) = (1-zt)^{-1} =[z], \quad \quad \sigma^n(z)=z^n.
     $$
      Note that for $A = \Z$ this means that $\sigma_t(az) = (1-zt)^{-a} = a[z]$.
\item For arbitrary $A$, consider the canonical $\lambda$-ring structure on $W(A)$.  The opposite $\lambda$-ring structure on $W(A)$ is a $\lambda$-ring map $\bm{\sigma_u}:W(A) \rightarrow W(W(A))$.  On Teichm\"{u}ller elements $[a] \in W(A)$ we have $\bm{\sigma_u}([a]) = ([1]-[a]u)^{-1}$.  We denote this by $[[\;[a]\;]]$, the \emph{double Teichm\"{u}ller element.}
\end{itemize}


\section{Varieties over Finite Fields}\label{FiniteFieldCase}
Let $X$ be a variety over a finite field $\F_q$  (reduced scheme of finite type over $\Spec \F_q$). The Hasse-Weil zeta function (or simply the zeta function) of $X$ is
\begin{equation} \label{zetafunction}
Z(X,t) = \prod_{x \in |X|} (1-t^{\deg x})^{-1}
\end{equation}
a power series over $\Z$, where the product ranges over closed points $x \in |X|$.  This may also be expressed as
\begin{equation} \label{weilzetafunction}
Z(X,t) = \exp \left[\sum_{r=1}^\infty N_r(X) \frac{t^r}{r} \right]
\end{equation}
where $N_r(X) = \# X(\F_{q^r})$ the number of points over $\F_{q^r}$.

Due to the work of Grothendieck and others on the Weil conjectures, the zeta function can be written in terms the action of the Frobenius on (compactly supported) \'{e}tale cohomology:
\begin{equation}\label{zetapoly}
Z(X,t) = \frac{\prod_i P_{2i+1}(t)}{\prod_i P_{2i}(t)}, \quad P_i(t) = \prod_j (1-\alpha_{ij} t)^{-1}
\end{equation}
where $\alpha_{ij}$ are the  inverse eigenvalues of the Frobenius action $\Phi$ on $H_{et,c}^i(\overline{X};\Q_\ell)$.  These are so-called Weil $q$-numbers $|\alpha_{i,j}|= q^{r/2}$ for some $r \leqslant i$ (Deligne) and  $Z(X,t)$ is a rational function (Dwork).

We wish to consider the zeta function $Z(X,t)$ as an element in the big Witt ring $W(\Z)$.  The following is shown in \cite[Thm 2.1]{Ramachandran15}:
\begin{proposition}\label{weilzetawitt}
For $X$ and $Y \in \Var_{\F_q}$, the zeta function $X \times_{\F_q} Y$ is given by the Witt product
$$
Z(X\times_{\F_q} Y, t) = Z(X,t) *_W Z(Y,t).
$$
Moreover, the Frobenius operator $\Fr_r$ on $W(\Z)$ corresponds to base change $X_{\F_{q^r}}$, so that $\Fr_r(Z(X/\F_q,t)) = Z(X/\F_{q^r},t).$
\end{proposition}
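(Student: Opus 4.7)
The plan is to verify both statements by applying the ghost map $\gh: W(\Z) \to \Z^{\N}$. Since $\Z$ is $\Z$-torsion-free, Lemma \ref{newton} tells us that ghost coordinates uniquely determine an element of $W(\Z)$, so it suffices to check each identity coordinate by coordinate. Moreover, the ghost map is a ring homomorphism where $\Z^{\N}$ has pointwise product, and it intertwines $\Fr_r$ with the shift $b_m \mapsto b_{mr}$.

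The first step is to compute the ghost coordinates of $Z(X,t)$ itself. Starting from the exponential expression \eqref{weilzetafunction}, a direct logarithmic differentiation yields
$$
\frac{t}{Z(X,t)}\frac{dZ(X,t)}{dt} = \sum_{r\geqslant 1} N_r(X)\, t^r,
$$
so that $\gh_r(Z(X,t)) = N_r(X) = \#X(\F_{q^r})$. With this identification in hand, each assertion reduces to a standard fact about $\F_{q^r}$-points.

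For the product formula, the obvious equality $(X\times_{\F_q} Y)(\F_{q^r}) = X(\F_{q^r}) \times Y(\F_{q^r})$ gives $N_r(X\times_{\F_q} Y) = N_r(X)\, N_r(Y)$ for every $r$, which is exactly the statement that $\gh_r(Z(X\times_{\F_q} Y,t)) = \gh_r(Z(X,t))\cdot \gh_r(Z(Y,t))$ for all $r$. Since the ghost map is a ring homomorphism and is injective on $W(\Z)$, this proves $Z(X\times_{\F_q} Y,t) = Z(X,t) *_W Z(Y,t)$.

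For the Frobenius statement, I use the relation $\gh_m(\Fr_r(P)) = \gh_{mr}(P)$ recorded in Section \ref{lambdarings}. Applied to $P = Z(X/\F_q,t)$ this gives $\gh_m(\Fr_r(Z(X/\F_q,t))) = N_{mr}(X)$. On the other side, base change yields $X_{\F_{q^r}}(\F_{(q^r)^m}) = X(\F_{q^{rm}})$, so $\gh_m(Z(X/\F_{q^r},t)) = N_{rm}(X)$ as well. Injectivity of the ghost map then gives $\Fr_r(Z(X/\F_q,t)) = Z(X/\F_{q^r},t)$. There is no genuine obstacle here; the only care required is to keep the two roles of $r$ (as Frobenius index and as base-change exponent) aligned, which the ghost-coordinate computation makes transparent.
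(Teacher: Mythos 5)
Your proposal is correct and follows essentially the same route as the paper's own sketch: identify $\gh_r(Z(X,t)) = N_r(X)$ from the exponential form \eqref{weilzetafunction}, use multiplicativity of point counts over $\F_{q^r}$ for the Witt product, and use $\gh_m \circ \Fr_r = \gh_{mr}$ together with $X_{\F_{q^r}}(\F_{q^{rm}}) = X(\F_{q^{rm}})$ for the Frobenius statement. You simply make explicit the injectivity of the ghost map on $W(\Z)$ (via Lemma \ref{newton} and torsion-freeness), which the paper leaves implicit.
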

\begin{proof}[Sketch]
Notice that from (\ref{weilzetafunction}), $\gh_n(Z(X,t)) = N_n(X)$.  As this is multiplicative, the zeta function takes products in the Witt ring.  Finally, recall that $\gh_n \circ \Fr_r = \gh_{nr}$ and note that $\gh_n(X_{\F_{q^r}}) = N_{nr}(X) = \gh_{nr}(X/\F_q)$.
\end{proof}
The zeta function can be written in the Witt ring $W(\Z)$ as
$$
Z(X,t) = \sum_{i,j} (-1)^{i+1}[\alpha_{ij}]
$$
where the sum is taking place in the Witt ring and $[\alpha]$ is the Teichm\"{u}ller element $(1-\alpha t)^{-1}$.  This directly follows from the presentation in (\ref{zetapoly}) as an alternating product of polynomials $P_i = (1-\alpha_{ij}t)^{-1}$ and Proposition \ref{weilzetawitt} above.  Although each Teichm\"{u}ller element $[\alpha_{ij}]$ is in $W(\overline{\Q}_\ell)$, the sum nevertheless lies in the subring $W(\Z)$.\\

We now restate our main result.
\begin{theorem}
Given a quasi-projective variety $X \in \Var_{\F_q}$, we have
\begin{equation}\label{maineq}
\sum_{n \geqslant 0} Z(\Sym^n X, t) u^n =  \sum_{i,j} (-1)^{i+1}[[\; [\alpha_{ij}] \;]] \in W(W(\overline{\Q_\ell}))
\end{equation}
\end{theorem}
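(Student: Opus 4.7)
My plan is to verify the identity (\ref{maineq}) in the ring $W(W(\overline{\Q_\ell})) = \Lambda(W(\overline{\Q_\ell}))$ by matching ghost coordinates. Since both $W(\overline{\Q_\ell})$ and $\overline{\Q_\ell}$ are $\Z$-torsion-free, two invocations of Lemma \ref{newton} reduce the problem to comparing the iterated scalar ghost $\gh_r(\gh_m(\cdot)) \in \overline{\Q_\ell}$ for every pair of positive integers $(m,r)$.

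On the right-hand side, the ring homomorphism $\gh_m : W(W(\overline{\Q_\ell})) \to W(\overline{\Q_\ell})$ distributes over the alternating sum. A direct computation of $uP'(u)/P(u)$ for $P(u) = (1-[\alpha]u)^{-1}$ yields $\gh_m([[\;[\alpha]\;]]) = [\alpha]^m = [\alpha^m]$, using that the Witt product of Teichm\"{u}ller elements is again a Teichm\"{u}ller. Therefore
$$
\gh_m(\text{RHS}) \;=\; \sum_{i,j}(-1)^{i+1}[\alpha_{ij}^m] \;=\; \Fr_m(Z(X,t)) \;=\; Z(X_{\F_{q^m}},t)
$$
by Proposition \ref{weilzetawitt}, and a further application of $\gh_r$ returns the point-count $N_r(X_{\F_{q^m}}) = N_{mr}(X)$.

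On the left-hand side, I use functoriality of the ghost with respect to coefficient changes (if $\phi : R \to S$ is a ring homomorphism, then $\gh_m \circ \Lambda_\phi = \phi \circ \gh_m$) to exchange the outer ghost with the inner one:
$$
\gh_r(\gh_m(\text{LHS})) \;=\; \gh_m\!\left(\sum_{n\geqslant 0} N_r(\Sym^n X)\, u^n\right).
$$
The essential classical input is the identity
$$
\sum_{n\geqslant 0} N_r(\Sym^n X)\, u^n \;=\; Z(X_{\F_{q^r}}, u),
$$
which holds because $\Sym^n$ commutes with base change for quasi-projective $X$, the $\F_{q^r}$-points of $\Sym^n(X_{\F_{q^r}})$ are in natural bijection with effective $0$-cycles of degree $n$ on $X_{\F_{q^r}}$, and the Euler product expansion (\ref{zetafunction}) identifies $Z(Y,u)$ with the generating series $\sum_n \#\{\text{effective }0\text{-cycles of degree } n \text{ on } Y\}\, u^n$. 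Consequently $\gh_r(\gh_m(\text{LHS})) = \gh_m(Z(X_{\F_{q^r}},u)) = N_m(X_{\F_{q^r}}) = N_{mr}(X)$, matching the right-hand side.

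The main obstacle is the classical identity $\sum_n N_r(\Sym^n X)\, u^n = Z(X_{\F_{q^r}}, u)$, which simultaneously encodes the quasi-projectivity hypothesis (needed for base change and the $0$-cycle bijection) and the Euler product factorization of the zeta function; all remaining manipulations are formal bookkeeping with ghost coordinates. A slicker alternative route would be to first establish the $\lambda$-ring identity $\sum_n Z(\Sym^n X, t)\, u^n = \bm{\sigma_u}(Z(X,t))$ (equivalent to saying that $Z$ is a $\lambda$-measure in the sense of Section \ref{lambdaringmeasures}) and then apply the ring homomorphism $\bm{\sigma_u}$ term-by-term to the Teichm\"{u}ller decomposition $Z(X,t) = \sum_{i,j}(-1)^{i+1}[\alpha_{ij}]$, using $\bm{\sigma_u}([\alpha]) = [[\;[\alpha]\;]]$; both routes ultimately rest on the same classical combinatorial input.
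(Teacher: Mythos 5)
Your proof is correct and takes essentially the same route as the paper: both reduce the identity in $W(W(\overline{\Q_\ell}))$ to a comparison of iterated ghost coordinates (using $\Z$-torsion-freeness at both levels), with the key input being the identity $\sum_n N_r(\Sym^n X)\,u^n = Z(X_{\F_{q^r}},u)$, which is precisely the content of the paper's Lemma \ref{frobnewton}. The only difference is bookkeeping: the paper makes the Newton relation $\phi$ explicit and applies the inner ghost map to it, while you exchange the two ghost maps by functoriality and invoke the base-change identity directly.
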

This can be viewed as a closed product formula for the generating series for zeta functions of symmetric powers.

\subsection{Proof of the Main Theorem}
Recall the result on Newton's identities from Lemma \ref{newton}: in $\Lambda(A)$, for $P(t) = \sum_n a_n t^n$ and $b_n = \gh_n(P(t))$, we have
$$
na_n = b_n + a_1b_{n-1} + \cdots a_{n-1}b_{1}
$$
Recursively, this gives a way to recover $a_n$ from the ghost coordinates $b_1,b_2,\ldots,b_n$.  That is, $a_n$ can be written purely in terms of $b_i$ for $i = 1, 2, \ldots n$ by replacing each $a_i$ in the above relation.  Let us call this relation $\phi$ so that
$$
a_n = \phi(b_1,b_2,\ldots,b_n).
$$

\begin{lemma}\label{frobnewton}
For $X$ a quasi-projective variety over $\F_q$,
\begin{equation}\label{frobnewtoneq}
N_r(\Sym^n X) = \phi(N_r(X),N_{2r}(X),\ldots,N_{nr}(X))
\end{equation}
\end{lemma}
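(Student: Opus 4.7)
The plan is to express $Z(X_{\F_{q^r}}, t)$ in two distinct ways: as a power series in $\Lambda(\Z)$ whose $n$th coefficient is exactly $N_r(\Sym^n X)$, and as the Frobenius image $\Fr_r\, Z(X, t)$ in $W(\Z)$ whose ghost coordinates can be read off as values of the form $N_{nr}(X)$. Newton's identity (Lemma \ref{newton}) then bridges these two descriptions via the polynomial $\phi$.

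First I would recall the classical enumerative interpretation of (\ref{zetafunction}): expanding the Euler product identifies the coefficient of $t^n$ in $Z(X, t)$ with the number of effective zero-cycles of degree $n$ on $X$. For quasi-projective $X$ these cycles correspond (via Galois descent applied to $X^n(\overline{\F_q})/S_n$) to the $\F_q$-points of $\Sym^n X$. Applying this to the base change $X_{\F_{q^r}}$ and using the compatibility $\Sym^n(X_{\F_{q^r}}) = (\Sym^n X)_{\F_{q^r}}$ yields
$$Z(X_{\F_{q^r}}, t) \;=\; \sum_{n \geqslant 0} N_r(\Sym^n X)\, t^n \;\in\; \Lambda(\Z),$$
so the $\Lambda$-coefficients $a_n$ of $Z(X_{\F_{q^r}}, t)$ are exactly the quantities of interest.

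Next I would apply Proposition \ref{weilzetawitt} to write $Z(X_{\F_{q^r}}, t) = \Fr_r(Z(X, t))$ inside $W(\Z)$. Combining the identity $\gh_n \circ \Fr_r = \gh_{nr}$ with the observation (already used in the proof of that proposition) that $\gh_k(Z(X, t)) = N_k(X)$, the $n$th ghost coordinate is $b_n = \gh_n(Z(X_{\F_{q^r}}, t)) = N_{nr}(X)$. Invoking Lemma \ref{newton} to express $a_n = \phi(b_1, \ldots, b_n)$ and substituting the values of $a_n$ and $b_k$ produces (\ref{frobnewtoneq}).

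There is no serious obstacle; the argument is the assembly of three clean ingredients. The point requiring the most care is the first step, where one must know that $(\Sym^n X)(\F_{q^r})$ genuinely parametrizes effective zero-cycles of degree $n$ on $X_{\F_{q^r}}$ — this is where the quasi-projectivity hypothesis enters, ensuring the GIT quotient $X^n/S_n$ exists as a variety and that its Frobenius-fixed geometric points are exactly effective zero-cycles of the correct degree.
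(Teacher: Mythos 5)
Your proposal is correct and follows essentially the same route as the paper: both use the effective zero-cycle interpretation of $\Sym^n X$ to write the zeta function as $\sum_n N(\Sym^n X)t^n$, identify the ghost coordinates with point counts via $\gh_n \circ \Fr_r = \gh_{nr}$ and Proposition \ref{weilzetawitt}, and conclude by Newton's identity through $\phi$. The only cosmetic difference is that the paper first proves the $r=1$ case and then applies $\Fr_r$, while you work directly over $\F_{q^r}$, making explicit the base-change compatibility $\Sym^n(X_{\F_{q^r}}) = (\Sym^n X)_{\F_{q^r}}$ that the paper leaves implicit.
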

\begin{proof}
Over $\overline{\F_q}$, the symmetric power $\Sym^n X$ parametrizes effective zero cycles.  These are exactly the degree $n$ effective cycles on $X$.  Thus the zeta function $Z(X,t)$ may be written as
\begin{equation}\label{zetaforlemma}
Z(X,t) =\sum_n N_1(\Sym^n X)t^n.
\end{equation}
As $\gh_r(Z(X,t)) = N_r(X)$, this implies that
$$
N_1(\Sym^n X) = \phi(N_1(X),N_2(X),\ldots,N_n(X)).
$$
Apply the Frobenius operator $\Fr_r$ for $W(\Z)$ to equation (\ref{zetaforlemma}).  Equation (\ref{frobnewtoneq}) then follows from the observation that on ghost coordinates, $\gh_n (\Fr_r(P)) = \gh_{nr}(P)$ for $P \in W(\Z)$ and from Proposition \ref{weilzetawitt}, $\Fr_r Z(X/\F_q,t) = Z(X/\F_{q^r},t)$.
\end{proof}

\begin{proof}[Proof of Theorem \ref{mainthm}]
Denote the ghost map on $W(W(\Z))$ by $\gh^u$ and let $\beta_n$ be the $n$th ghost coordinate of the sum double Teichm\"{u}ller elements in (\ref{maineq}):
$$
\beta_n =\gh^u_n\left(\sum_{i,j} (-1)^{i+1}[[\, [\alpha_{ij}] \,]]\right) = \sum_{i,j} (-1)^{i+1} [\alpha_{ij}]^n \in W(\Z).
$$
We wish to show
$$
Z(\Sym^nX,t) = \phi( \beta_1,\beta_2,\ldots, \beta_n).
$$
This relation is taking place in $W(\Z)$.  We now use the ghost map $\gh^t$ on $W(\Z)$.  It suffices to show for all $r$,
$$
\gh_r^t Z(\Sym^n X,t) = \phi( \gh_r^t \beta_1, \gh_r^t \beta_2,\ldots, \gh_r^t \beta_n).
$$
We have
$$
\gh^t_r\beta_n = \gh^t_r  \left( \sum_{i,j} (-1)^{i+1} [\alpha_{ij}]^n  \right) = \sum_{i,j} (-1)^{i+1} \alpha_{ij}^{nr} = N_{nr}(X) \in \Z.
$$
Moreover, $\gh_r^t Z(\Sym^n X,t) = N_r(\Sym^n X)$.  Now recall from Lemma \ref{frobnewton}
$$
N_r(\Sym^n X) = \phi(N_r(X),N_{2r}(X),\ldots,N_{nr}(X)).
$$
As this holds for all $r$, the proof follows.
\end{proof}

\section{Examples}\label{examples}
We apply the generating series formula to compute $Z(\Sym^n X,t)$ for various cases of varieties $X$ over finite fields.  These serve to verify the formula and demonstrate the facility of writing the zeta function in the Witt ring.  Note that below, sums of Teichm\"{u}ller elements are taking place in the appropriate Witt ring.

\subsection{Affine and projective space}
Consider $n$-dimensional affine space $\A^n$ and $n$-dimensional projective space $\PP^n$ over $\F_q$.  Then $
N_r(\A^n) = q^{nr}$ and we have
$$
Z(\A^n,t) = \frac{1}{(1-q^n t)}
$$
It is also easy to show that $N_r(\PP^n) = q^{nr}+ q^{(n-1)r} + \cdots +q^r + 1$ so that
$$
 Z(\PP^n,t) = \frac{1}{(1- t)(1-qt)\cdots(1-q^mt)}
$$
In the Witt ring $W(\Z)$, these zeta functions are $Z(\A^n,t) = [q^n]$ and  $Z(\PP^n,t) =  [q^m] + [q^{m-1}] + \cdots + [q] + [1]$.

Recall that $\Sym^n \A^1 = \A^n$ and $\Sym^n \PP^1 = \PP^n$, and that
$$
Z(\A^1,t) = \frac{1}{1-qt} = [q] \quad \textrm{and} \quad Z(\PP^1,t) = \frac{1}{(1-t)(1-qt)} = [1] + [q]
$$
The formula provided in Theorem \ref{mainthm} predicts that $Z(\Sym^n \A^1,t)$ is the coefficient of $u^n$ in $[[\;[q]\;]] \in W(W(\Z))$.  Similarly, that  $Z(\Sym^n \PP^1,t)$ is the coefficient of $u^n$ in $[[\;[1]\;]] + [[\;[q]\;]] \in W(W(\Z))$. We have
$$
[[\;[q]\;]] = \frac{1}{[1]-[q]u} = [1] + [q]u + [q^2] u^2 + \cdots
$$
and
$$
[[\;[1]\;]] + [[\;[q]\;]]= \left(\frac{1}{[1]-[1]u}\right) \left(\frac{1}{[1]-[q]u}\right) = [1] + ([1]+[q])u + \cdots
$$
which agrees with the zeta functions $Z(\A^n,t)$ and $Z(\PP^n,t)$ described above:

\subsection{Elliptic curves}
Let $E$ be an elliptic curve over $\F_q$. In this case, we have
$$
Z(E,t) = \frac{(1-\alpha t)(1 - \beta t)}{(1-t)(1-qt)}
$$
where $\alpha + \beta = a \in \Z$ and $\alpha\beta = q$.  Written in the Witt ring, this appears as
$$
Z(E,t) = [1] - [\alpha] - [\beta] + [q],
$$
where $[\alpha] = (1-\alpha t)^{-1}$ the Teichm\"{u}ller element.  The formula provided by Theorem \ref{mainthm} is
$$
Z(\Sym^n E,t) = \textrm{coefficient of $u^n$ in} \left[\frac{(1-[\alpha]u)(1-[\beta] u))}{(1-[1] u)(1-[q]u))} \right].
$$
The results in the section in fact work for any curve $C$ over $\F_q$.  In this general case, the eigenvalues $\alpha$ and $\beta$ for the action on $H^1(\overline{C},\Q_\ell)$ come in pairs $\alpha_i$ and $\beta_i$ for $i = 1,2,\ldots,g$ where $g$ is the genus.  In fact, the case of symmetric powers of smooth projective curves was worked out by MacDonald in \cite{MacDonald62} in 1962.  Here we work in the elliptic curve case for simplicity.

\subsection{Symmetric powers of affine and projective space}
We use the formula in Theorem \ref{mainthm} to compute the zeta functions $Z(\Sym^n \A^m,t)$ and $Z(\Sym^n \PP^m,t)$.  Note that these varieties are no longer smooth once $m>1$. Recall that
$$
Z(\A^m,t) = \frac{1}{(1-q^mt)} = [q^m] \quad \textrm{and} \quad Z(\PP^m,t) = \frac{1}{(1-t)(1-[q^m])} = [1] + [q^m]
$$
Then by the formula we have for affine space
$$
Z(\Sym^n \A^m, t) = \textrm{ coefficient of $u^n$ in } [[\;[q^m]\;]] = [q^{nm}],
$$
which agrees with the fact that $[\Sym^n \A^m]= [\A^{nm}]$.  For projective space,
$$
Z(\Sym^n \PP^m, t) = \textrm{ coefficient of $u^n$ in } [[\;[1]\;]]+[[\;[q]\;]]+\cdots + [[\;[q^m]\;]].
$$
Note that this coefficient is not $[1] + [q]+\cdots + [q^{nm}]$ as $\Sym^n \PP^m$ is not $\PP^{nm}$.  For instance, consider $n=2$ and $m=2$,
$$
Z(\Sym^2 \PP^2, t) = [1] + [q] + 2[q^2] + [q^3] + [q^4]
$$
whereas
$$
Z(\PP^4,t) = [1] + [q]+ [q^2]+[q^3] + [q^4].
$$

\subsection{Product of elliptic curves}
Consider $X = E_1 \times E_2$ a product of elliptic curves over $\F_q$.  We use the formula in Theorem \ref{mainthm} to compute $Z(\Sym^n X,t)$ in terms of $Z(E_1,t)$ and $Z(E_2,t)$.  Let
$$
Z(E_1,t) = \frac{(1-\alpha_1 t)(1 - \beta_1 t)}{(1-t)(1-qt)} \quad \textrm{and} \quad
Z(E_2,t) = \frac{(1-\alpha_2 t)(1 - \beta_2 t)}{(1-t)(1-qt)}
$$
and recall that in the Witt ring, these can be written as
$$
Z(E_1,t) = [1] - [\alpha_1] - [\beta_1] + [q] \quad\textrm{and} \quad
Z(E_2,t) = [1] - [\alpha_2] - [\beta_2] + [q].
$$
Then since the zeta function takes products in the Witt ring, we may compute $Z(E_1 \times E_2,t)$ as
\begin{eqnarray*}
Z(E_1\times E_2,t) &=& ([1] - [\alpha_1] - [\beta_1] + [q] )* ([1] - [\alpha_2] - [\beta_2] + [q]) \\
&=& [1]- [\alpha_1] - [\alpha_2] - [\beta_1] - [\beta_2] + [\alpha_1\beta_2]+[\alpha_2\beta_1] +[\alpha_1\alpha_2] \\
&& + [\beta_1\beta_2] - [\alpha_1q] - [\alpha_2q] - [\beta_1q] - [\beta_2q] + [q^2].
\end{eqnarray*}
Thus the motivic zeta function generating series is
\begin{eqnarray*}
\zeta_Z(E_1\times E_2,u) &=& \sum_{n=0}^\infty Z(\Sym^n(E_1\times E_2),t) u^n\\
&=& [[\;[1]\;]]- [[\;[\alpha_1]\;]] - [[\;[\alpha_2]\;]] - [[\;[\beta_1]\;]] - [[\;[\beta_2]\;]] \\
&&+ [[\;[\alpha_1\beta_2]\;]]+[[\;[\alpha_2\beta_1]\;]] +[[\;[\alpha_1\alpha_2]\;]] + [[\;[\beta_1\beta_2]\;]] \\
&&- [[\;[\alpha_1q]\;]] -[[\; [\alpha_2q]\;]] - [[\;[\beta_1q]\;]] - [[\;[\beta_2q]\;]] + [[\;[q^2]\;]].
\end{eqnarray*}
Note that these equalities are being written in $W(W(A))$.  These methods similarly also work for $n$-fold products of smooth projective curves; however, the formulas quickly become unwieldy.

\section{Lambda-ring valued motivic measures} \label{lambdaringmeasures}
Let $k$ be a field and $\Var_k$ be the category of varieties over $k$ (reduced schemes of finite type over $\Spec k$).  The \emph{Grothendieck ring of varieties} $\GK$ is the abelian group generated by symbols $[X]$ of isomorphism classes of $X \in \Var_k$ subject to the scissor relation $[X] = [Y] + [X \setminus Y]$ for $Y$ any closed subvariety of $X$.  $\GK$ is a commutative ring under the product $[X]\cdot[Y] = [X \times_k Y]$.  This is the so-called universal value group of Euler-Poincar\'{e} characteristics on $\Var_k$; ring homomorphisms on $\GK$ are called \emph{motivic measures}.  That is, for $A$ a commutative ring with identity, we consider $\mu:\GK \rightarrow A$.  Kapranov in \cite{Kapranov} constructs a \emph{motivic zeta function} $\zeta_\mu(X,t)$ for $X \in \Var_k$ using these algebraic invariants $\mu(X)$.  For $X$ quasi-projective, it is the generating series for (the measure of) symmetric powers $\Sym^n X$:
$$
\zeta_\mu(X,t) = \sum_{n=0}^\infty \mu(\Sym^n X) t^n \in A[[t]].
$$
As $\GK$ is additively generated by quasiprojective $X$, this defines a group homomorphism on $\GK$ taking values in the group of invertible power series $\Lambda(A)$.  If $\zeta_\mu:\GK \rightarrow W(A)$ is a ring homomorphism , we say that $\mu$ exponentiates (see \cite{Ramachandran15}).

For example, in the finite field case\footnote{In this case, we must pass through the $\widetilde{K_0}(\Var_{\F_q})$ Grothendieck ring modulo \emph{radicial morphisms}, but all the measures on $\Var_{\F_q}$ we consider factor through $\widetilde{K_0}(\Var_{\F_q})$. See Mustata \cite[p. 78]{Mustata}.} $X \in \Var_{\F_q}$, the motivic zeta function $\zeta_{\mu_\#}$ associated to  counting measure $\mu_\#([X]) = \#X(\F_q)$  recovers the usual zeta function $Z(X,t)$ (see Mustata \cite[Prop 7.31]{Mustata}).  We have seen that $Z(X\times_k Y,t) = Z(X,t)*_W Z(Y,t)$; hence $\mu_\#$ can be exponentiated.

We are particularly interested in $A$-valued motivic measures where $A$ is a $\lambda$-ring.  Note that $\GK$ has a pre-$\lambda$-ring structure provided by the Kapranov motivic zeta function, defined such that $\sigma^n([X]) = [\Sym^n X]$ for $X$ quasiprojective.  This, however, is not a $\lambda$-ring  structure (see Larsen and Lunts \cite{LarsenLunts}).

\begin{definition}
Let $A$ be a $\lambda$-ring.  A motivic measure $\mu:\GK \rightarrow A$ is called a $\lambda$-measure if any of the equivalent conditions hold:
\begin{itemize}
\item $\mu$ is a pre-$\lambda$-ring map, $\mu(\Sym^n[X]) = \sigma^n \mu(X)$.
\item The associated motivic zeta function factors as
$$\label{diagram}
\xymatrixcolsep{3pc}\xymatrix{
K_0(\Var_k) \ar[rd]_{\zeta_\mu} \ar[r]^\mu
&R \ar[d]^{\sigma_t}\\
&W(R)}	
$$
\end{itemize}
\end{definition}

\begin{proposition}[Ramachandran, Tabuada \cite{RamaTabuada}]\label{NRTaba}
If $\mu$ is a $\lambda$-measure, then $\mu$ exponentiates.
\end{proposition}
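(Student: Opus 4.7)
The plan is to exhibit the factorization $\zeta_\mu = \sigma_t \circ \mu$ as maps $\GK \to W(A)$, which already forms half of the definition of a $\lambda$-measure, and then read off the conclusion from the fact that both factors are ring homomorphisms. The main ingredient is that $\sigma_t: A \to W(A)$ is itself a ring homomorphism when $A$ is a $\lambda$-ring, which is supplied by Proposition \ref{sigma}.

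First I would verify the equivalence of the two conditions in the definition of a $\lambda$-measure. For $X$ quasi-projective, the first condition $\mu([\Sym^n X]) = \sigma^n\mu([X])$ rearranges to
\[
\zeta_\mu(X,t) \;=\; \sum_{n \geqslant 0} \mu([\Sym^n X])\, t^n \;=\; \sum_{n \geqslant 0} \sigma^n(\mu([X]))\, t^n \;=\; \sigma_t(\mu([X])),
\]
which is precisely the asserted factorization on the generators of $\GK$. Conversely, the factorization forces this identity on coefficients. Since $\GK$ is additively generated by classes of quasi-projective varieties, and both $\zeta_\mu$ and $\sigma_t \circ \mu$ are \emph{group} homomorphisms from $(\GK,+)$ to $(W(A),+_W)=(\Lambda(A),\times)$ (the former by the Kapranov construction and the stratification $[\Sym^n X] = \sum_i [\Sym^i Y][\Sym^{n-i}(X\setminus Y)]$ for $Y\subset X$ closed; the latter by composition), agreement on generators promotes to agreement on all of $\GK$.

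With the identity $\zeta_\mu = \sigma_t \circ \mu$ in hand as maps $\GK \to W(A)$, the proposition follows immediately: $\mu: \GK \to A$ is a ring homomorphism by hypothesis, and $\sigma_t: A \to W(A)$ is a ring homomorphism by Proposition \ref{sigma} (indeed, a $\lambda$-ring homomorphism). Hence the composite $\zeta_\mu$ is a ring homomorphism into $W(A)$, which is exactly what it means for $\mu$ to exponentiate.

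The main (and essentially only) obstacle is the step of extending the pointwise identity $\zeta_\mu([X],t) = \sigma_t(\mu([X]))$ from quasi-projective generators to arbitrary classes in $\GK$. This requires knowing that $\zeta_\mu$ as originally defined by Kapranov's formula does descend to a well-defined group homomorphism on $\GK$, which relies on the multiplicative behavior of symmetric powers under stratification. Once this is granted, the remainder of the argument is formal.
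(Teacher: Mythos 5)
Your proposal is correct and follows the same route as the paper: the paper's entire proof is that $\zeta_\mu = \sigma_t \circ \mu$ (which is part of the definition of a $\lambda$-measure) is a composition of ring homomorphisms, with $\sigma_t$ a ring homomorphism by Proposition \ref{sigma}. The extra verification you give of the equivalence of the two conditions in the definition and of the extension from quasi-projective generators to all of $\GK$ is sound detail that the paper leaves implicit, but it does not change the argument.
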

\begin{proof}
The composition above is a ring homomorphism.
\end{proof}

\begin{example}
The generating series in (\ref{macd}) for the Poincar\'{e} polynomial is the Kapranov motivic zeta function $\zeta_{\mu_P}$ associated to the Poincar\'{e} polynomial measure $\mu_P(X) = P(X,z)$.  Since the ghost coordinates $\gh_n(\zeta_{\mu_P}(X,t)) = P(X,z^n)$ are multiplicative, $\zeta_P$ takes values in the Witt ring.  MacDonald's formula (\ref{macd}) can be written in the Witt ring $W(\Z[z])$ as
$$
\zeta_{\mu_P}(X,t) =  \sum_{i=0}^{2m} (-1)^i b_i(X) [z^i].
$$
The zeta function for the Poincar\'{e} polynomial appears as a Poincar\'{e} polynomial written in Teichm\"{u}ller elements.  Similarly for $\chi$, we have
$$
\zeta_\chi(X,t) = \sum_{i=0}^{2m} (-1)^i b_i(X) [1] = \chi(X)[1],
$$
an Euler characteristic written in Teichm\"{u}ller elements.  Moreover, both these motivic zeta functions factor through the $\lambda$-ring structure maps and thus both $\chi$ and $\mu_P$ are $\lambda$-measures:
\begin{proposition}
For the Euler characteristic, $\zeta_\chi = \sigma_t \circ \chi$,  where $\sigma_t$ is the opposite $\lambda$-ring structure map on $\Z$ and for the Poincar\'{e} polynomial $\zeta_{\mu_P} = \sigma_t' \circ \mu_P$, where $\sigma_t'$ is the opposite $\lambda$-ring structure map on $\Z[z]$.
\end{proposition}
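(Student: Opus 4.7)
The plan is to view both identities as MacDonald's formula (\ref{macd}) rewritten through the $\lambda$-ring lens: compute the opposite structure maps $\sigma_t$ and $\sigma_t'$ explicitly on $\chi(X) \in \Z$ and on $P(X,z) \in \Z[z]$, and recognize the resulting power series as the generating series of $\chi(\Sym^n X)$, resp.\ $P(\Sym^n X, z)$.  Since both sides of each proposed equality are additive maps on $\GK$, it suffices to verify the identity on the classes of quasi-projective $X$, where the generating series of interest are honest Kapranov zeta functions.

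I treat the Poincar\'{e} polynomial case first.  By Proposition \ref{sigma}, $\sigma_t'\colon \Z[z] \to W(\Z[z])$ is a ring homomorphism, and by the second bullet in Section \ref{lambdarings} we have $\sigma_t'(z) = (1-zt)^{-1} = [z]$.  Combining multiplicativity of $\sigma_t'$ with the Teichm\"{u}ller identity $[a]*_W[b] = [ab]$ yields $\sigma_t'(z^i) = [z^i] = (1-z^i t)^{-1}$ for every $i \geqslant 0$.  Writing $P(X,z) = \sum_{i=0}^{2m} (-1)^i b_i z^i$ and recalling that addition in $W(\Z[z])$ is multiplication of power series while Witt-negation is inversion, I obtain
$$
\sigma_t'(P(X,z)) \;=\; \prod_{i=0}^{2m} (1-z^i t)^{(-1)^{i+1} b_i} \;=\; \frac{\prod_{i \text{ odd}} (1-z^i t)^{b_i}}{\prod_{i \text{ even}} (1-z^i t)^{b_i}}.
$$
MacDonald's formula (\ref{macd}) identifies the right-hand side with $\sum_{n \geqslant 0} P(\Sym^n X, z)\,t^n = \zeta_{\mu_P}(X,t)$, which proves $\zeta_{\mu_P} = \sigma_t' \circ \mu_P$.

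The Euler characteristic statement is then a specialization.  The evaluation $\Z[z] \to \Z,\ z \mapsto 1$, is a $\lambda$-ring morphism (it sends $\lambda_t(z) = 1+zt$ to $\lambda_t(1) = 1+t$) and therefore intertwines $\sigma_t'$ with $\sigma_t$ through the induced map $W(\Z[z]) \to W(\Z)$.  Under this specialization $P(X,z) \mapsto \chi(X)$ and $P(\Sym^n X, z) \mapsto \chi(\Sym^n X)$, so the Poincar\'{e} identity descends to $\zeta_\chi(X,t) = \sigma_t(\chi(X)) = (1-t)^{-\chi(X)}$, as claimed.

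The only real obstacle is bookkeeping with the two conventions on $W(A)$: I must consistently translate additive Witt-language to multiplicative power-series language, so that $\Z$-scalar multiplication becomes exponentiation and the alternating sum defining $P(X,z)$ produces the alternating product on the right-hand side of MacDonald's formula.  Once this dictionary is in place, neither half of the proposition requires input beyond Proposition \ref{sigma}, the explicit $\lambda$-ring structures on $\Z$ and $\Z[z]$ recorded after it, and the classical identity (\ref{macd}).
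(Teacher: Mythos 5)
Your proof is correct and follows essentially the same route as the paper: both arguments amount to computing $\sigma_t'(P(X,z)) = P(X,[z])$ (via $\sigma_t'(z)=[z]$ and multiplicativity) and identifying the result with the Witt-ring form of MacDonald's formula (\ref{macd}), which is exactly $\zeta_{\mu_P}(X,t)$. The only cosmetic difference is that the paper handles the Euler characteristic directly through $\sigma_t(a)=a[1]$ and $\zeta_\chi(X,t)=\chi(X)[1]$, whereas you obtain it by specializing the Poincar\'{e} identity along the $\lambda$-ring map $z\mapsto 1$; both are immediate.
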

\begin{proof}
 Recall that for $a \in \Z$, $\sigma_t(a) = a[1]$ and for $f(z) \in \Z[z]$, $\sigma_t'(f(z)) = f([z])$.
\end{proof}
\end{example}

\begin{definition}
Given a measure that exponentiates $\mu:\GK \rightarrow A$, the associated motivic zeta function $\zeta_\mu:\GK \rightarrow W(A)$ is a ring homomorphism.  The induced motivic measure is called the \emph{zeta function measure} $\mu_Z(X) = \zeta_\mu(X,t)$; it is a $W(A)$-valued motivic measure.
\end{definition}

\begin{theorem}\label{zetaofzeta}
If $\mu:\GK \rightarrow A$ is a $\lambda$-measure, then its induced zeta function measure $\mu_Z:\GK \rightarrow W(A)$ is also a $\lambda$-measure.  This process iterates ad infinitum.
\end{theorem}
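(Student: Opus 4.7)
The plan is to observe that when $\mu$ is a $\lambda$-measure, the zeta function measure $\mu_Z$ factors as $\sigma_t \circ \mu$, and then invoke Proposition \ref{sigma} to conclude. First, the hypothesis that $\mu$ is a pre-$\lambda$-ring map yields $\mu(\Sym^n X) = \sigma^n(\mu(X))$ for $X$ quasiprojective, so that
$$
\mu_Z(X) \;=\; \zeta_\mu(X,t) \;=\; \sum_{n \geqslant 0} \mu(\Sym^n X)\, t^n \;=\; \sum_{n \geqslant 0} \sigma^n(\mu(X))\, t^n \;=\; \sigma_t(\mu(X)),
$$
exhibiting $\mu_Z = \sigma_t \circ \mu$ as a composition of maps $\GK \to A \to W(A)$. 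Since $W(A)$ carries its canonical $\lambda$-ring structure (with opposite structure map $\bm{\sigma_u}$), the statement ``$\mu_Z$ is a $\lambda$-measure'' is meaningful; what remains is to check that $\mu_Z$ is a pre-$\lambda$-ring map.

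Next, I would invoke Proposition \ref{sigma}, which asserts that $\sigma_t: A \to W(A)$ is itself a $\lambda$-ring homomorphism. Pre-$\lambda$-ring maps are closed under composition with $\lambda$-ring maps, so the composite $\mu_Z = \sigma_t \circ \mu$ is a pre-$\lambda$-ring map. Equivalently, one may verify the defining identity directly:
$$
\mu_Z(\Sym^n X) \;=\; \sigma_t\bigl(\sigma^n(\mu(X))\bigr) \;=\; \bm{\sigma^n_u}\bigl(\sigma_t(\mu(X))\bigr) \;=\; \bm{\sigma^n_u}(\mu_Z(X)),
$$
where the middle equality is exactly the content of Proposition \ref{sigma} (that $\sigma_t$ intertwines the opposite $\lambda$-structure maps on $A$ and $W(A)$). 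This makes $\mu_Z$ a $\lambda$-measure.

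For the iteration, $W(A)$ is itself a $\lambda$-ring, so the hypotheses of the theorem apply with $(A,\mu)$ replaced by $(W(A),\mu_Z)$. The zeta function measure of $\mu_Z$ is then the $\lambda$-measure $(\mu_Z)_Z = \bm{\sigma_u} \circ \mu_Z : \GK \to W(W(A))$, and this procedure iterates indefinitely, producing a tower of $\lambda$-measures valued in the successive iterates $W^n(A)$. The main conceptual content, rather than any obstacle, is recognizing the factorization $\mu_Z = \sigma_t \circ \mu$; once this is in place the argument is purely formal and all substantive input is absorbed into Proposition \ref{sigma}.
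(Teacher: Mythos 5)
Your proposal is correct and follows essentially the same route as the paper: the paper's (very terse) proof is precisely that $\zeta_\mu$ factors as $\sigma_t\circ\mu$ by the definition of a $\lambda$-measure, that $\sigma_t$ is a $\lambda$-ring map by Proposition \ref{sigma}, and that the composition is therefore again a ($\lambda$-)measure, with iteration exactly as you describe. Your write-up simply makes explicit the coefficientwise identity $\sigma_t(\sigma^n(\mu(X)))=\bm{\sigma_u^n}(\mu_Z(X))$ that the paper leaves implicit.
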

\begin{proof}
By Proposition \ref{sigma}, the map $\sigma_t$ is a $\lambda$-ring map.  The composition above is a $\lambda$-ring map.
\end{proof}

\begin{remark}
The zeta function $Z(X,t)$ is the Kapranov motivic zeta function associated to the counting measure $\mu_\#$; however, counting measure is not a $\lambda$-measure.  Recall the unique $\lambda$-ring structure on $\Z$: for $a \in \Z$,
$$
\lambda_t(a) =(1+t)^a, \quad \lambda^n(a) = \left(\! \begin{array}{c} a \\ n \end{array} \! \right),
\quad \sigma^n(a) = \left(\! \begin{array}{c} a+n-1 \\ n \end{array} \! \right).
$$
However, it is easy to verify that $\mu_\#(\Sym^n X) \neq \sigma^n (\mu_\#(X))$.  For instance, $X = \A^1$, $\Sym^n \A^1 = \A^n$ and $\mu_\#(\A^1) = q$ whereas $\mu_\#(\A^n) = q^n \neq \sigma^n(q)$.  Thus, the exponentiability of the induced zeta function measure $\mu_Z$ does not follow from the above Theorem.  However,

\begin{corollary}[Corollary to Theorem \ref{mainthm}]
Let $Z(X,t)$ be the zeta function for $X \in \Var_{\F_q}$ and $\mu_Z$ its induced zeta function measure.  Then
$$
\zeta_{\mu_Z}(X,u) = \bm{\sigma_u}(Z(X,t)) \in W(W(\Z))
$$
and so $\mu_Z$ is a $\lambda$-measure.
\end{corollary}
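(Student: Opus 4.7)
The plan is to deduce the corollary directly from Theorem \ref{mainthm} by recognizing the right-hand side of the main formula as the image of $Z(X,t)$ under the opposite $\lambda$-ring structure map $\bm{\sigma_u}$ on $W(\Z)$.

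First, I would recall from Section \ref{FiniteFieldCase} the Witt-ring presentation $Z(X,t) = \sum_{i,j} (-1)^{i+1}[\alpha_{ij}]$ as a $+_W$-alternating sum of Teichm\"uller elements inside $W(\Z) \subset W(\overline{\Q_\ell})$. Since $W(\Z)$ carries the canonical $\lambda$-ring structure $\bm{\lambda_u}$, Proposition \ref{sigma} applied to $A = W(\Z)$ gives that the opposite structure map $\bm{\sigma_u} : W(\Z) \to W(W(\Z))$ is a $\lambda$-ring homomorphism; in particular it is a ring homomorphism with respect to the Witt structures, so it is additive in the sense of $+_W$. By the definition recorded in the last bullet of Section \ref{lambdarings}, it sends each Teichm\"uller element to the double Teichm\"uller element, $\bm{\sigma_u}([\alpha]) = ([1]-[\alpha]u)^{-1} = [[\,[\alpha]\,]]$.

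Applying $\bm{\sigma_u}$ termwise to the expression for $Z(X,t)$ then yields
$$
\bm{\sigma_u}(Z(X,t)) \;=\; \sum_{i,j} (-1)^{i+1}\,[[\,[\alpha_{ij}]\,]],
$$
which by Theorem \ref{mainthm} is exactly $\sum_{n \geqslant 0} Z(\Sym^n X, t)\, u^n = \zeta_{\mu_Z}(X,u)$. This establishes the displayed formula of the corollary. To conclude that $\mu_Z$ is a $\lambda$-measure, I would appeal directly to the definition: the identity $\zeta_{\mu_Z} = \bm{\sigma_u} \circ \mu_Z$ just proved is exactly the required factorization through the opposite $\lambda$-ring structure map on the target ring $W(\Z)$.

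There is no serious obstacle here—the corollary is a bookkeeping repackaging of the main theorem. The only mild subtlety is that the alternating sum expressing $Z(X,t)$ must be interpreted as a Witt sum in order to apply additivity of $\bm{\sigma_u}$ termwise, but this is built into the setup via Proposition \ref{weilzetawitt} together with the rational factorization (\ref{zetapoly}). It is worth emphasizing that this result is \emph{not} a special case of Theorem \ref{zetaofzeta}: as noted in the preceding remark, the counting measure $\mu_\#$ is not itself a $\lambda$-measure, so the $\lambda$-measure property of $\mu_Z$ genuinely requires the finer information supplied by Theorem \ref{mainthm}.
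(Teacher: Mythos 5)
Your proposal is correct and follows essentially the same route as the paper's own proof: both combine the expression of $\zeta_{\mu_Z}(X,u)$ from Theorem \ref{mainthm} as an alternating Witt sum of double Teichm\"{u}ller elements with the facts that $\bm{\sigma_u}$ is a ring homomorphism and sends $[\alpha]$ to $[[\;[\alpha]\;]]$, then read off the factorization defining a $\lambda$-measure. The extra details you supply (invoking Proposition \ref{sigma} for $A = W(\Z)$, flagging that the alternating sum is a Witt sum, and noting the contrast with Theorem \ref{zetaofzeta}) are consistent with the paper and do not change the argument.
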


\begin{proof}
The formula in Theorem \ref{mainthm} shows that for $X \in \Var_{\F_q}$
$$
\zeta_{\mu_Z}(X,u) =  \sum_{i,j} (-1)^{i+1}[[\; [\alpha_{ij}] \;]].
$$
Recall that the map $\bm{\sigma_u}$ on Teichm\"{u}ller elements is given by the double Teichm\"{u}ller $\bm{\sigma_u}([\alpha]) = [[\;[\alpha]\;]]$.  As $\bm{\sigma_u}$ is a ring homomorphism, we have $\zeta_{\mu_Z} = \bm{\sigma_u}(Z(X,t))$.
\end{proof}
\end{remark}

\begin{remark}
The formula for $Z(\Sym^n X,t)$ in Theorem \ref{mainthm} is analogous to the MacDonald formulae above, as it is simply the zeta function written in (double) Teichm\"{u}ller elements.  And similarly, the formula shows that $\mu_Z$ is a $\lambda$-measure.
\end{remark}

\bibliographystyle{amsplain}
\bibliography{mybib}

\providecommand{\bysame}{\leavevmode\hbox to3em{\hrulefill}\thinspace}
\providecommand{\MR}{\relax\ifhmode\unskip\space\fi MR }
\providecommand{\MRhref}[2]{%
  \href{http://www.ams.org/mathscinet-getitem?mr=#1}{#2}
}
\providecommand{\href}[2]{#2}
\begin{thebibliography}{10}

\bibitem{Bloch}
S.~Bloch, \emph{Algebraic {$K$-}theory and crystalline cohomology}, Inst.
  Hautes \'{E}tudes Sci. Publ. Math. \textbf{47} (1977), 187--268.

\bibitem{Rollin01}
S.~del Ba\~{n}o Rollin, \emph{On the {C}how motive of some moduli spaces}, J.
  Reine Angew. Math. \textbf{532} (2001), 105--132.

\bibitem{Hazewinkel}
M.~Hazewinkel, \emph{Witt vectors. {I}.}, Handb. Algebr. \textbf{6} (2009),
  319--472.

\bibitem{Heinloth07}
F.~Heinloth, \emph{A note on functional equations for zeta functions with
  values in {C}how motives}, Ann. Inst. Fourier (Grenoble) \textbf{57} (2007),
  no.~6, 1927--1945.

\bibitem{Kapranov}
M.~Kapranov, \emph{The elliptic curve in {$S$-}duality theory and {E}isenstein
  series for {K}ac-{M}oody groups}, Available at arxiv:math/0001005v2.

\bibitem{Knutson}
D.~Knutson, \emph{$\lambda$-rings and the {R}epresentation {T}heory of the
  {S}ymmetric {G}roup}, Lecture Notes in Mathematics, vol. 308,
  Springer-Verlag, Berlin-New York, 1973.

\bibitem{LarsenLunts}
M.~Larsen and V.~Lunts, \emph{Rationality criteria for motivic zeta functions},
  Compos. Math. \textbf{140} (2004), no.~6, 1537--1560.

\bibitem{MacDonald}
I.~G. MacDonald, \emph{The {P}oincar\'{e} polynomial of a symmetric product},
  Proc. Cambridge Philos. Soc. \textbf{58} (1962), 563--568.

\bibitem{MacDonald62}
\bysame, \emph{Symmetric products of an algebraic curve}, Topology \textbf{1}
  (1962), 319--343.

\bibitem{MaximSchurmann}
L.~Maxim and J.~Sch\"{u}rmann, \emph{Twisted genera of symmetric products},
  Selecta Math. (N.S.) \textbf{18} (2012), no.~1, 283--317.

\bibitem{Mustata}
M.~Mustata, \emph{Zeta functions in algebraic geometry}, Available at
  \href{http://www.math.lsa.umich.edu/~mmustata/zeta_book.pdf}{Link}.

\bibitem{Ramachandran15}
N.~Ramachandran, \emph{Zeta functions, {G}rothendieck groups, and the {W}itt
  ring}, Bull. Sci. Math. \textbf{139} (2015), no.~6, 599--627.

\bibitem{RamaTabuada}
N.~Ramachandran and G.~Tabuada, \emph{Exponentiable motivic measures}, J.
  Ramanujan Math. Soc. \textbf{30} (2015), no.~4, 349--360.

\bibitem{Yau}
D.~Yau, \emph{Lambda-{R}ings}, World Scientific Publishing Co. Pte. Ltd.,
  Hackensack, NJ, 2010.

\end{thebibliography}

\end{document}